\documentclass[12pt,a4paper,reqno]{amsart}
\usepackage{amsfonts}
\usepackage{amssymb}
\usepackage{amsthm}
\usepackage{amsmath}
\usepackage{newlfont}
\usepackage{graphicx}
\usepackage{color}

\def\r{\mathbb R}
\def\d{\mathrm d}
\def\l{\mathbb L}
\def\h{\mathbb H}

\def\t{\hbox{\bf T}}
\def\n{\hbox{\bf N}}
\def\b{\hbox{\bf B}}

%%%%%%%%%%%%%%%%%%%%%%%%%%%%%%%%%%%%%
\setlength{\textwidth}{15cm}
\setlength{\parskip}{2mm}
\setlength{\parindent}{0em}

\newtheorem{theorem}{Theorem}[section]
\newtheorem{definition}[theorem]{Definition}
\newtheorem{proposition}[theorem]{Proposition}

\newtheorem{example}[theorem]{Example} 
\newtheorem{lemma}[theorem]{Lemma} 

%%%%%%%%%%%%%%%%%
\title{Classification of separable surfaces with constant Gaussian curvature}
\author{Thomas Hasanis}
\address{$^1$ Department of Mathematics\\
               University of Ioannina\\
               45110 Ioannina, Greece}
\email{thasanis@cc.uoi.gr}

\author{Rafael L\'opez}
\address{$^2$ Departamento de Geometr\'{\i}a y Topolog\'{\i}a\\
 Universidad de Granada\\
 18071 Granada, Spain }
\email{rcamino@ugr.es}
\thanks{Rafael L\'opez has  partially supported by  the grant no. MTM2017-89677-P, MINECO/AEI/FEDER, UE.}

\subjclass{Primary 53A10; Secondary 53C42}

\keywords{Gaussian curvature, separable surface, cylindrical surface, conical surface,  rotational surface}
\date{}

%%%%%%%%%%%%%%

\begin{document}
\maketitle

\begin{abstract}
We classify  all  surfaces  with constant Gaussian curvature $K$ in Euclidean $3$-space that can be expressed as an implicit equation of type $f(x)+g(y)+h(z)=0$, where $f$, $g$ and $h$ are real functions of one variable.   If $K=0$, we prove that the surface is a  surface of revolution, a cylindrical surface or a conical surface, obtaining explicit parametrizations of such surfaces. If $K\not=0$, we prove that the surface is a surface of revolution.  
\end{abstract}

%%%%%%%%%%%%%%%%%%%%%%%%%%%%
\section{Introduction and statement of the results} 
%%%%%%%%%%%%%%%%%%%%%%%%

The  objective of our investigation is the classification of all surfaces with constant Gaussian curvature in Euclidean $3$-space  that can be expressed by  an implicit equation of type $f(x)+g(y)+h(z)=0$, where $f$, $g$ and $h$ are real functions of one variable. Our   motivation arises from the classical theory of minimal surfaces. For example,  historically the first two minimal surfaces are separable, namely, the catenoid   $\cosh(z)^2=x^2+y^2$   by Euler in 1744, and    the helicoid $\tan(z)=y/x$   by Meusnier 1776.

 In 1835,   Scherk discovered all minimal surfaces of type $z=\phi(x)+\psi(y)$, where $\phi$ and $\psi$ are two real functions (\cite{sc}). Later,   Weingarten addressed the classification problem of all minimal surfaces of type $f(x)+g(y)+h(z)=0$,  realizing  that  form a rich and large family of minimal  surfaces (\cite{we}). For example, this family contains a variety of  minimal surfaces given in term of elliptic integrals as well as  periodic minimal surfaces such as the  Schwarz surfaces of type $P$ and $D$.   In the middle of the above century,   Fr\'echet  gave a deep study of these surfaces obtaining   examples with explicit parametrizations \cite{fr1,fr2}.  The   reader can see a  description of these surfaces in  \cite[II-5.2]{ni}.   

We introduce the following terminology. Let $\r^3$ denote the Euclidean $3$-dimensional space, that is, the real vector $3$-space $\r^3$ endowed with the Euclidean metric $\langle,\rangle=dx^2+dy^2+dz^2$, where $(x,y,z)$ stand for the canonical coordinates of $\r^3$.  Locally, any surface of $\r^3$ is   the zero level set  $F(x,y,z)=0$ of a  function $F$ defined in  an open set $O\subset\r^3$. Our  interest  are   those surfaces where the function $F$ is a separable function of its variables $x$, $y$ and $z$.

\begin{definition} A  (regular) surface $S$ in $\r^3$ is said to be separable if  can be expressed as  
\begin{equation}\label{s1}
S=\{(x,y,z)\in\r^3:f(x)+g(y)+h(z)=0\}.
\end{equation}
 Here $f$, $g$ and $h$ are   smooth functions defined in some intervals $I_1$, $I_2$ and $I_3$ of $\r$, respectively. 
  \end{definition}
 By the regularity of $S$,    $f'(x)^2+g'(y)^2+h'(z)^2\not=0$ for every $x\in I_1$, $y\in I_2$ and $z\in I_3$.

 In this paper, we consider the following question:
 
\begin{quote} 
{\it What are the    separable surfaces in Euclidean space $\r^3$ with   constant Gaussian curvature?}
\end{quote}

There are  three particular examples of separable surfaces that deserve be pointed out because they are obtained by simples choices of the functions $f$, $g$ and $h$ in the  equation \eqref{s1}.
 
 \begin{enumerate}
\item {\it  Right cylinders}. A right cylinder is   formed by  all the lines that are orthogonal  to a given   planar curve $C$.  If   $C$ is contained in one coordinate plane, then the surface is separable where one of the functions $f$, $g$ or $h$ is constant. So, if $C$ is contained in the $xy$-plane, then  $C$ writes as $f(x)+g(y)=0$. The corresponding right cylinder is the surface $\{(x,y,z)\in\r^3: f(x)+g(y)=0\}$. 

 \item {\it Translation surfaces}. A translation surface is a surface that, after renaming the the coordinates, can be  expressed as $z=\phi(x)+\psi(y)$, where $\phi$ and $\psi$ are  smooth functions. This surface is the sum of the plane curves $x\mapsto (x,0,\phi(x))$   and   $z\mapsto (0,y,\psi(y))$, so  the surface is generated when we move one of  these curves by means of the translations along the other ones. A separable surface is a translation surface if and only one of the three functions in (\ref{s1}) is linear, say,   $h(z)=az+b$, with $a,b\in\r$ and $a\not=0$ (the case $a=0$ corresponds with a right cylinder). 
 
 \item {\it Rotational surfaces}. A surface of revolution  with  respect to   the $z$-line  writes as $h(z)=x^2+y^2$, hence that  it  is a separable surface. In general, if the rotation axis is parallel to  the $z$-line, the implicit equation of the surface is $h(z)=x^2+y^2+ax+by+c$, with $a,b,c\in\r$.
\end{enumerate}

Among the above surfaces, our question  has a known answer. Indeed, any right cylinder   has  zero constant Gaussian curvature. On the other hand, translation surfaces $z=\phi(x)+\psi(y)$ with constant Gaussian curvature were classified by Liu, proving that   $K=0$ and one of the functions $\phi$ or $\psi$ is linear (\cite{li}). In such a case, and if $\phi(x)=ax+b$, $a,b\in\r$, $a\not=0$, the implicit equation of the surface is $z=ax+b+\phi(y)$, hence that the surface is a   non-right cylindrical surface.  Finally, rotational surfaces with constant Gaussian curvature are well known in the literature; see for example \cite{gr}.

 A first  approach to the study of separable surfaces with constant Gaussian curvature was done by the second author and Moruz in \cite{lm}, where were   classified all surfaces of type  $z=\phi(x)\psi(y)$ with zero constant Gaussian curvature: these  surfaces  will appear as a particular case of  Theorem \ref{t1} below.

The purpose of this paper is to provide   a complete classification of all separable surfaces with constant Gaussian curvature, obtaining explicit parametrizations of these surfaces. The classification depends on if the Gaussian curvature is zero or is not zero.

\begin{theorem}\label{t1}
 The only  separable surfaces with zero constant Gaussian curvature are congruent to:
\begin{enumerate}
\item A right cylinder over a planar curve contained in one of the coordinate planes.  
\item A translation surface $z=ax+g(y)$, where $a\not=0$ and $g$ is any smooth function.
\item A surface of revolution with zero constant Gaussian curvature.  
\item A   cylindrical surface and a conical surface with   parametrizations \eqref{k01}, \eqref{k03} and \eqref{k02}; see Section \ref{s3}. 
\end{enumerate}
\end{theorem}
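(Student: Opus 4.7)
I begin by computing the Gaussian curvature of the implicit surface $F(x,y,z) := f(x)+g(y)+h(z)=0$. Since $F$ is separable, its Hessian is diagonal and all mixed partials vanish, so the classical formula for the Gaussian curvature of an implicit surface collapses to
\[
K\bigl(f'^{2}+g'^{2}+h'^{2}\bigr)^{2} \;=\; f'^{2}g''h'' + f''g'^{2}h'' + f''g''h'^{2}.
\]
Hence $K=0$ on $S$ is equivalent to the vanishing of the right-hand side at every point of $S$.

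\textbf{Degenerate branch.} I next split the analysis according to whether one of $f'',g'',h''$ vanishes identically. Assume for instance $h''\equiv 0$, so $h(z)=az+b$. If $a=0$ the surface $f(x)+g(y)=\text{const}$ is a right cylinder, yielding case (1). If $a\neq 0$ the curvature equation reduces to $a^{2}f''(x)g''(y)=0$, and because $(x,y)$ ranges over an open region of the plane, one of $f,g$ is also linear; solving for $z$ then exhibits $S$ as the translation surface $z=ax'+g(y')$ of case (2).

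\textbf{Nondegenerate case: the ODE system.} In the complementary regime, each of $f'',g'',h''$ is nonzero on some open set, and division transforms $K=0$ into
\[
\alpha(x)+\beta(y)+\gamma(z)=0 \ \text{on } S,\qquad \alpha := \frac{f'^{2}}{f''},\ \ \beta := \frac{g'^{2}}{g''},\ \ \gamma := \frac{h'^{2}}{h''}.
\]
Since this identity must hold on the same 2-surface as $f+g+h=0$, the gradients $(\alpha',\beta',\gamma')$ and $(f',g',h')$ are everywhere proportional, giving $\alpha'/f'=\beta'/g'=\gamma'/h'$. Because the first quotient depends only on $x$ and the second only on $y$, this common value must be a real constant $\lambda$. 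Integration then yields the coupled ODE system
\[
(f')^{2} = f''(\lambda f+c_{1}),\qquad (g')^{2} = g''(\lambda g+c_{2}),\qquad (h')^{2} = h''(\lambda h+c_{3}),
\]
with the compatibility $c_{1}+c_{2}+c_{3}=0$ inherited from $\alpha+\beta+\gamma=0$ on $S$.

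\textbf{Resolution and main obstacle.} The classification is then completed by solving these ODEs case-by-case in $\lambda$. For $\lambda=0$, direct integration gives $f(x)=-c_{1}\log|x-x_{0}|$ (and analogously for $g,h$); the resulting surface $\prod|x-x_{i}|^{-c_{i}}=\text{const}$, with $\sum c_{i}=0$, is invariant under central scaling at $(x_{0},y_{0},z_{0})$ and is therefore a conical surface, falling under case (4). For $\lambda=1$ the shifted equation $F'^{2}=F F''$ integrates to pure exponentials $F(x)=Ae^{Bx}$, so the surface is invariant under the 1-parameter translation along $(1/B_{1},1/B_{2},1/B_{3})$, yielding a (generally oblique) cylindrical surface, also in case (4). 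For $\lambda\notin\{0,1\}$, separation of variables gives $F(x)=A(x+B)^{\lambda/(\lambda-1)}$, so the surface $\sum A_{i}(x+B_{i})^{p}=0$ with $p=\lambda/(\lambda-1)$ is scale-invariant at $(-B_{1},-B_{2},-B_{3})$ and is once again conical; the distinguished value $\lambda=2$, $p=2$, with two of the $A_{i}$ equal, produces a circular cone and thus the surface of revolution of case (3). The principal technical obstacle is in this last step: I need to justify that the local separation constant $\lambda$ is globally defined (via a smoothness/analytic-continuation argument across the zeros of $f''g''h''$), to exhaust the degenerate subcases where some $A_{i}$, $B_{i}$ or $c_{i}$ vanishes, and to reconcile the scale- and translation-invariant families above with the explicit parametrizations \eqref{k01}, \eqref{k03}, \eqref{k02} of Section~\ref{s3}.
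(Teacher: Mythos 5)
Your argument is correct and follows essentially the same route as the paper: after dispatching the cases where one of $f''$, $g''$, $h''$ vanishes identically, you divide the curvature equation by $f''g''h''$, differentiate the resulting identity $\sum f'^2/f''=0$ along the surface to extract a separation constant $\lambda$ (your gradient-proportionality step is exactly the paper's Lemma \ref{le1} in the variables $u=f(x)$, $v=g(y)$, $w=h(z)$, with $\alpha=2X/X'$), and integrate the resulting ODEs into the logarithmic, exponential and power families \eqref{k01}, \eqref{k03}, \eqref{k02}. The only organizational difference is that the paper splits off the rotational case a priori via Proposition \ref{pr-tres}(3), whereas you recover it as the $\lambda=2$ member of the power family; both are valid, and the local/degenerate caveats you flag at the end are likewise left implicit in the paper.
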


In case that $K$  is a nonzero constant, the  classification  is the following.

\begin{theorem} \label{t2}
The only separable surfaces with  nonzero constant Gaussian curvature are the rotational surfaces of constant curvature whose rotational axis is parallel to one of the coordinate axes.
\end{theorem}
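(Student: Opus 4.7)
The proof begins with the curvature formula. Applying the standard bordered-Hessian expression for the Gaussian curvature of the implicit surface $F(x,y,z):=f(x)+g(y)+h(z)=0$, and using that the mixed second partials of $F$ vanish because it is separable, one finds the identity
\begin{equation*}
K\bigl(f'^{2}+g'^{2}+h'^{2}\bigr)^{2}=f''g''\,h'^{2}+f''h''\,g'^{2}+g''h''\,f'^{2}
\end{equation*}
on the $2$-surface $\{f+g+h=0\}$; call this equation $(\ast)$.

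The first step is to eliminate the degenerate cases. If one of $f'$, $g'$, $h'$ vanishes identically, the surface is a right cylinder and has $K=0$, excluded by hypothesis. If one of $f''$, $g''$, $h''$ vanishes identically, the corresponding function is affine and, after permuting coordinates if necessary, the surface is a translation graph $z=\phi(x)+\psi(y)$; by Liu's theorem \cite{li}, such a surface with constant $K$ must have $K=0$, again excluded. Hence one may assume throughout that $f', g', h', f'', g'', h''$ are all nonzero on a dense open subset of the surface.

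The heart of the argument is to show that, after relabeling the coordinate axes if necessary, $f''$ and $g''$ are equal nonzero constants. I would parametrize a piece of the surface locally by $(x,y)$ through $z=z(x,y)$ defined by $h(z)=-f(x)-g(y)$, so that $z_{x}=-f'/h'$ and $z_{y}=-g'/h'$, and differentiate $(\ast)$ with respect to $x$ and $y$. The chain rule produces polynomial identities in $f', g', h', f'', g'', h'', f''', g''', h'''$ holding identically in $(x,y)$. Isolating in these identities the subexpressions that depend on only one of $x$, $y$, $z$ and applying a standard separation-of-variables argument forces such subexpressions to be constants; iterating with several combinations of derivatives eliminates the $z$-dependence and delivers $f'''\equiv g'''\equiv 0$ together with $f''=g''=:\alpha\neq 0$. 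Hence $f(x)=\tfrac{\alpha}{2}x^{2}+\beta_{1}x+\gamma_{1}$ and $g(y)=\tfrac{\alpha}{2}y^{2}+\beta_{2}y+\gamma_{2}$, so that after a translation in the $x$- and $y$-directions the defining equation reads $\tfrac{\alpha}{2}(x^{2}+y^{2})+h(z)+\mathrm{const}=0$: a surface of revolution about an axis parallel to the $z$-axis. The remaining ODE for $h$ is the classical profile-curve equation for a rotational surface of constant curvature \cite{gr}.

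The principal obstacle will be this middle step: $(\ast)$ lives on the $2$-surface $\{f+g+h=0\}$ rather than on all of $\r^{3}$, so the separation-of-variables analysis must be carried out under this constraint, and one must rule out the possibility that the distinguished pair of quadratic functions is other than $(f,g)$. This is handled by the symmetric role of $f$, $g$, $h$ in $(\ast)$: whichever pair ultimately emerges as quadratic selects the direction of the rotational axis, and after relabeling coordinates one may assume the pair is $(f,g)$.
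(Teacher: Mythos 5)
Your setup (the implicit curvature formula $(\ast)$ and the elimination of the cylinder and translation cases) matches the paper's, but the entire difficulty of the theorem is concentrated in the step you describe as ``a standard separation-of-variables argument,'' and as written that step is a gap, not a proof. The identity $(\ast)$ holds only on the two-dimensional set $\{f(x)+g(y)+h(z)=0\}$, so $x$, $y$, $z$ are not independent: when you parametrize by $(x,y)$ with $z=z(x,y)$, every term involving $h$ and its derivatives depends on \emph{both} $x$ and $y$ through $z$, and there are no subexpressions depending on only one variable left to isolate; the standard lemma (a function of $x$ equal to a function of $y$ is constant) does not apply. The paper has to work hard to get around exactly this obstacle: it changes variables to $u=f(x)$, $v=g(y)$, $w=h(z)$ with $X=f'^2$, $Y=g'^2$, $Z=h'^2$, so that the constraint becomes the linear relation $u+v+w=0$; it uses that differentiating an identity $Q(u,v,w)=0$ along that plane yields $Q_u=Q_v=Q_w$ (not $=0$); and it then needs three successive technical lemmas --- that $X'Y+XY'$ cannot vanish, that the coefficients $A$, $B$, $C$ produced by one round of differentiation cannot vanish, and that the ratios $B/A$ and $C/A$ are functions of $u+v$ alone --- before reaching a contradiction by comparing symmetric functions of the roots of a cubic equation in $Z$. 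None of this is ``standard,'' and your proposal gives no indication of how the separation would actually be carried out under the constraint. You do flag the constraint as ``the principal obstacle,'' but the remedy you offer (the symmetry of $f$, $g$, $h$) only addresses which pair of functions ends up quadratic, not the analytic difficulty itself.

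A second, smaller discrepancy: the paper's argument is by contradiction. The rotational case is precisely the case in which one of $X'-Y'$, $Y'-Z'$, $X'-Z'$ vanishes identically (handled by the proposition on special cases in Section 2); assuming none of them vanishes, the paper derives a contradiction. It never directly ``delivers $f'''\equiv g'''\equiv 0$.'' Your plan of forcing $f$ and $g$ to be quadratics with equal leading coefficient is a legitimate target --- it is equivalent to the paper's conclusion --- but asserting that iterated differentiation ``eliminates the $z$-dependence'' is asserting the conclusion rather than proving it. Until you exhibit the actual chain of identities and explain how constancy is extracted despite the constraint $f+g+h=0$, the core of the proof is missing.
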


The organization of this paper is as follows. In Section \ref{s2} we obtain the expression of the Gaussian curvature $K$ of a separable surface and we distinguish the above three  special cases  of separable surfaces. In Section \ref{s3} we give the proof of Theorem \ref{t1} and in Section \ref{s4} we prove Theorem \ref{t2}.

%%%%%%%%%%%%%%%%%%%%%%%%%%%%%%%%%%%%
\section{Preliminaries}\label{s2}
%%%%%%%%%%%%%%%%%%%%%%%%%%%%%%%%%%%%

In our study, we need   the expression of the Gaussian curvature $K$ for a   surface defined by the implicit equation $F(x,y,z)=0$ for a smooth function   $F$   defined in a domain of $\r^3$.   Although this calculation may be seen as a mere exercise, as far as the authors know, the first reference where appears such a computation is   \cite{do}. In fact, Dombrowski obtains the expression of   the Gauss-Kronecker curvature $K$ for a hypersurface in Euclidean space $\r^{n+1}$ given by an implicit function $F(x_1,\ldots,x_{n+1})=0$;  see also \cite{sp}. In \cite{do}, it was derived the following formula for $K$: 
$$K=\frac{1}{|\nabla F|^{n+2}}\nabla F^t\cdot\mbox{co(Hess)}(F) \cdot\nabla F,$$
 where $\nabla F$ is the gradient of $F$ and $\mbox{co(Hess)}(F)$ is the matrix formed by the cofactors of $\mbox{Hess}(F)$. In the Euclidean space $\r^3$, the above formula reduces into 
 
\begin{align*}K|\nabla F|^4=&F_x^2\left|\begin{array}{ll}F_{yy}&F_{yz}\\ F_{yz}&F_{zz}\end{array}\right| +F_y^2 \left|\begin{array}{ll}F_{zz}&F_{xz}\\ F_{xz}&F_{xx}\end{array}\right|+F_z^2\left|\begin{array}{ll}F_{xx}&F_{xy}\\ F_{xy}&F_{yy}\end{array}\right|\nonumber \\
&-2F_xF_y\left|\begin{array}{ll}F_{xy}&F_{yz}\\ F_{xz}&F_{zz}\end{array}\right|-2F_yF_z\left|\begin{array}{ll}F_{yz}&F_{xz}\\ F_{xy}&F_{xx}\end{array}\right|-2F_xF_z\left|\begin{array}{ll}F_{xz}&F_{xy}\\ F_{yz}&F_{yy}\end{array}\right|.\end{align*}
  If the surface is    defined by the implicit equation $f(x)+g(y)+h(z)=0$, then the above expression of $K$ simplifies in
\begin{equation}\label{k1}
f'^2g''h''+g'^2f''h''+h'^2f''g''=K(f'^2+g'^2+h'^2)^2.
\end{equation}

A first case to distinguish is when one of the functions $f$, $g$ or $h$ is constant. Without loss of generality, we suppose that $h$ is constant, so $h(z)=a$, $z\in I_3$, for some $a\in\r$. Then  the equation of the surface is $f(x)+g(y)+a=0$, that is,  the surface is a right cylinder over the plane curve $C=\{(x,y)\in\r^2: f(x)+g(y)+a=0\}$ and its Gaussian curvature is $K=0$.  This case is the item 1 in Theorem   \ref{t1}.

In   what follows, we suppose that $S$ is not a right cylinder over a plane curve contained in one of the three coordinate planes. This is equivalent to $f'(x)g'(y)h'(z)\not=0$ 
everywhere in $I_1\times I_2\times I_3$.  Thus we can  introduce   the new variables 
\begin{equation}\label{uvw}
u=f(x),\quad v=g(y),\quad w=h(z),
\end{equation}
which are related by the equation $u+v+w=0$ thanks to \eqref{s1}.  Define the functions
$$X(u)=f'(x)^2,\quad Y(v)=g'(y)^2,\quad Z(w)=h'(z)^2.$$
Then
$$X'(u)= 2f''(x), \quad Y'(v) =2g''(y),\quad Z'(w) =2h''(z).$$
With this change of variables, equation (\ref{k1}) becomes 
\begin{equation}\label{k2}
XY'Z'+YX'Z'+ZX'Y'=4K(X+Y+Z)^2,
\end{equation}
for all values $u$, $v$ and $w$ under the condition $u+v+w=0$.  

Throughout this paper we need to differentiate equations similar to \eqref{k2} involving functions depending on $u$, $v$ and $w$. Since these variables are not independent  because $u+v+w=0$, the following result will be useful in our computations.

\begin{lemma} \label{le1}
Let $Q=Q(u,v,w)$ be a smooth function defined in a domain $\Omega\subset\r^3$. If $Q(u,v,w)=0$ for any triple of the section $\Omega\cap\Pi$, where $\Pi$ is the plane of equation $u+v+w=0$, then on the section we have
$$Q_u=Q_v=Q_w,$$
where $Q_u$, $Q_v$ and $Q_v$ are the derivatives of $Q$ with respect to $u$, $v$ and $w$, respectively.
\end{lemma}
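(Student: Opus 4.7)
The plan is to parametrize the plane $\Pi$ of equation $u+v+w=0$ by two free variables and then apply the chain rule to the identity $Q=0$. Concretely, I would use the parametrization $(u,v)\mapsto(u,v,-u-v)$ of $\Pi$, so that the hypothesis $Q\equiv 0$ on $\Omega\cap\Pi$ becomes the identity
\begin{equation*}
\widetilde Q(u,v):=Q(u,v,-u-v)=0
\end{equation*}
on the corresponding open subset of $\r^2$.

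Differentiating this identity with respect to $u$ and with respect to $v$, the chain rule yields
\begin{equation*}
\widetilde Q_u=Q_u-Q_w=0,\qquad \widetilde Q_v=Q_v-Q_w=0,
\end{equation*}
where all partial derivatives of $Q$ are evaluated at the point $(u,v,-u-v)$ lying on $\Pi$. This immediately gives $Q_u=Q_w$ and $Q_v=Q_w$ on the section, and hence $Q_u=Q_v=Q_w$, as required.

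There is essentially no obstacle here beyond the conceptual point that $Q_u$, $Q_v$, $Q_w$ must be interpreted as the three partial derivatives of the three-variable function $Q$ evaluated on $\Pi$, rather than as derivatives of the restricted function $\widetilde Q$. Once this distinction is made clear in the statement of the proof, the argument reduces to one application of the chain rule. Symmetry of the three variables (any of them could play the role of the dependent one) could alternatively be invoked to reach the same conclusion, but the direct computation above is the cleanest route.
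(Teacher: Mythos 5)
Your proof is correct and follows essentially the same route as the paper: substitute $w=-u-v$, differentiate the resulting identity, and obtain $Q_u=Q_w$ and $Q_v=Q_w$ by the chain rule. The only cosmetic difference is that the paper derives the second equality by "changing the roles" of the variables rather than by differentiating in $v$ directly.
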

\begin{proof}
Since $w=-u-v$, then $Q(u,v,-u-v)=0$. Differentiating with respect to $u$, we deduce $Q_u-Q_w=0$.   Changing the roles of $u$, $v$ and $w$, we conclude the result. 
\qed\end{proof}

We need to distinguish the three special cases of separable surfaces described in the Introduction in terms of the functions $X$, $Y$ and $Z$.

\begin{proposition}\label{pr-tres}
With the above notation, we have:
\begin{enumerate}
\item If one of   the functions $X$, $Y$ or $Z$ vanishes, then the surface is a right cylinder over a planar curve contained in one of the three coordinates planes.
\item If one of the functions $X'$, $Y'$ or $Z'$ vanishes, then the surface is a right cylinder over a planar curve contained in one of the three coordinates planes or it is a translation surface.
\item If one of the functions $X'-Y'$,   $X'-Z'$ or $Y'-Z'$ vanishes, then the surface is one of the type studied in the previous item, or it is a surface of revolution whose rotation axis is parallel to one of the three coordinate axes.
\end{enumerate}
\end{proposition}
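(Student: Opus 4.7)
The proposition is a dictionary entry: each algebraic vanishing condition on the auxiliary functions $X,Y,Z$ translates back, through the substitution \eqref{uvw}, into an explicit form for $f,g,h$, from which the geometric type of the surface is read off \eqref{s1}. I would handle the three items in turn; items (1) and (2) are essentially immediate, and item (3) is the only one requiring a small argument.

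For item (1), if say $X \equiv 0$, then $f'(x)^2 \equiv 0$, so $f$ is constant and \eqref{s1} collapses to $g(y)+h(z) = \text{const}$, which by definition is a right cylinder over a plane curve in the $yz$-plane. The cases $Y \equiv 0$ and $Z \equiv 0$ are symmetric. For item (2), since $X'(u) = 2f''(x)$, the condition $X' \equiv 0$ forces $f$ to be affine, $f(x)=ax+b$. If $a=0$ we fall back to item (1); if $a\not=0$ then one of the three functions in \eqref{s1} is linear with nonzero slope, which by the discussion following \eqref{s1} is precisely the defining condition for a translation surface. The cases $Y'\equiv 0$ and $Z'\equiv 0$ are analogous.

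Item (3) is the substantive case. Suppose without loss of generality that $X'-Y'\equiv 0$ on the section $\Omega\cap\Pi$. Because the constraint $u+v+w=0$ determines $w$ from $(u,v)$, the pair $(u,v)$ sweeps out an open subset of $\r^2$, and the identity $X'(u)=Y'(v)$ then separates variables, forcing both sides to equal a common constant $c$. If $c=0$ we return to item (2), so assume $c\not=0$. Integrating gives $f'(x)^2 = c f(x)+A_1$ and $g'(y)^2 = c g(y)+A_2$; differentiating and dividing by the nonvanishing $f'$ and $g'$ yields $f''=g''=c/2$, so $f$ and $g$ are quadratic polynomials with common leading coefficient $c/4$. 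Plugging these back into \eqref{s1} and completing squares in $x$ and $y$ displays the surface, after a translation in the $xy$-plane, as the implicit equation $(c/4)(x^2+y^2) = H(z)$ for a suitable function $H$, i.e.\ a surface of revolution whose axis is parallel to the $z$-axis. The remaining cases $X'-Z'\equiv 0$ and $Y'-Z'\equiv 0$ are treated identically and produce rotational surfaces around axes parallel to the $y$-axis and $x$-axis respectively.

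The only delicate point is the separation-of-variables step in item (3): one must know that on $\Omega\cap\Pi$ the pair $(u,v)$ genuinely ranges over a two-dimensional open set, so that an identity $X'(u)=Y'(v)$ there really forces both sides to be constant. This is guaranteed by the standing assumption $f'g'h'\not=0$, which makes $u=f(x)$, $v=g(y)$, $w=h(z)$ local diffeomorphisms onto open intervals. Everything else reduces to elementary ODE integration followed by completing the square.
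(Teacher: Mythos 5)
Your proposal is correct and follows essentially the same route as the paper: items (1) and (2) are read off directly from $X=f'^2$ and $X'=2f''$, and item (3) reduces to the observation that $X'(u)=Y'(v)$ on a two-dimensional set forces both to equal a constant $a$, after which the ODE $f'^2=af+b$ yields quadratic $f,g$ with common leading coefficient and hence a rotational surface. The only cosmetic difference is that you differentiate the first-order ODE to get $f''=g''=a/2$ while the paper writes down its solution directly; your explicit justification of the separation-of-variables step is a point the paper leaves implicit.
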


\begin{proof} We discuss case-by-case.
\begin{enumerate}
\item  When we introduced   the variables $u$, $v$ and $w$ in \eqref{uvw}, we showed that if one of the functions $f'$, $g'$ or $h'$ vanishes, then the surface is a right cylinder  over   a planar curve   contained in one of the three coordinate planes.
 
\item  Without loss of generality, we suppose  that  $Z'=0$. Because $Z=h'^2$, then   $h(z)=az+b$ with $a,b\in\r$.  This proves that the surface is a right cylinder ($a=0$) or a translation surface ($a\not=0$). 
\item  Without loss of generality, we suppose  that $X'-Y'=0$. Then there is $a\in\r$ such that $X'=Y'=a$. The case $a=0$ has been studied in the previous item. Assume now  $a\not=0$. Solving the equations $X'(u)=a$, $Y'(v)=a$, we find
$$f'(x)^2=af(x)+b_1,\quad g'(y)^2=a g(y)+b_2,$$
for some constants $b_1,b_2\in\r$. The solutions of these ODEs are
$$f(x)=\frac{(ax+c_1)^2}{4a}-\frac{b_1}{a},\quad g(y)=\frac{(ay+c_2)^2}{4a}-\frac{b_2}{a},$$
where $c_1,c_2\in\r$. Thus the surface is a surface of revolution with respect to a straight-line parallel to the $z$-axis.  
\end{enumerate}
\qed\end{proof}

%%%%%%%%%%%%%%%%%%
\section{Case $K=0$: proof of Theorem \ref{t1}}\label{s3}
%%%%%%%%%%%%%%%%%%%%%%%%%%%%%

In this section we will study separable surfaces with zero constant Gaussian curvature and we will prove Theorem \ref{t1}. 
If the Gauss curvature $K$ is constantly zero, then  equation (\ref{k2}) becomes  
 
\begin{equation}\label{eq2}
XY'Z'+YX'Z'+ZX'Y'=0,\quad \mbox{for all } u+v+w=0.
\end{equation}

By Proposition \ref{pr-tres}, the cases that one of the functions $X$, $Y$ or $Z$ is constant, or $X'$, $Y'$ or $Z'$ is $0$, or $X'-Y'$, $Y'-Z'$ or $X'-Z'$ is $0$ corresponds with the items  1, 2 and 3 of Theorem \ref{t1}, respectively.

From now on, we will suppose that the surface is not of the above three cases. In particular, $X', Y', Z'\not=0$, so equation (\ref{eq2}) can be expressed as  
$$
X'Y'Z'\left(\frac{X}{X'}+\frac{Y}{Y'}+\frac{Z}{Z'}\right)=0,
$$
or equivalently, 
\begin{equation}\label{eq4}
\frac{X}{X'}+\frac{Y}{Y'}+\frac{Z}{Z'}=0.
\end{equation}
By using Lemma \ref{le1}, we differentiate with respect to $u$, $v$ and $w$, obtaining
$$\left(\frac{X}{X'}\right)'=\left(\frac{Y}{Y'}\right)'=\left(\frac{Z}{Z'}\right)'.$$
Because we have three functions depending in the variables $u$, $v$ and $w$,  there is $k\in\r$ such that 
\begin{equation}\label{eq44}
\left(\frac{X}{X'}\right)'=\left(\frac{Y}{Y'}\right)'=\left(\frac{Z}{Z'}\right)'=k.
\end{equation}
We distinguish two cases.

\begin{enumerate}
\item Case $k=0$. Because $XYZ\not=0$, we deduce that there are $a,b,c\in\r$, $a,b,c\not=0$, such that 
$$ \frac{X}{X'} =\frac{a}{2},\quad  \frac{Y}{Y'}=\frac{b}{2},\quad \frac{Z}{Z'}=\frac{c}{2},$$
with   $a+b+c=0$ because of (\ref{eq4}). The integration of these equations leads to
\begin{eqnarray*}
f(x)&=&-a\log(m_1x+n_1)\\
 g(y)&=&-b\log(m_2y+n_2)\\
 h(z)&=&-c\log(m_3z+n_3),
 \end{eqnarray*}
where $m_i,n_i\in\r$ and $m_i\not=0$, $1\leq i\leq 3$.
Thus the implicit equation of the surface   $f(x)+g(y)+h(z)=0$ becomes
\begin{equation}\label{ho1}
(m_1x+n_1)^{a}(m_2y+n_2)^{b}(m_3z+n_3)^{c}=1,
\end{equation}
 or equivalently   
$$\left(x+\frac{n_1}{m_1}\right)^{a}\left(y+\frac{n_2}{m_2}\right)^{b}=m_1^{-a}m_2^{-b}m_3^{a+b}\left(z+\frac{n_3}{m_3}\right)^{a+b}.$$
This surface is the generalized cone with apex $\left(-n_1/m_1, -n_2/m_2,-n_3/m_3\right)$ and the directrix is the planar curve 
$$\left\{
\begin{array}{ll}
& \left(x+\dfrac{n_1}{m_1}\right)^{a}\left(y+\dfrac{n_2}{m_2}\right)^{b}=m_1^{-a}m_2^{-b}m_3^{a+b}\left(d+\dfrac{n_3}{m_3}\right)^{a+b}\\
&z=d\not=-\dfrac{n_3}{m_3}.
\end{array}
\right.$$
This case is included in the item (4) of Theorem \ref{t1}. Equation (\ref{ho1}) can be  expressed as 
\begin{equation}\label{k01}
m_3z+n_3=(m_1x+n_1)^{p}(m_2y+n_2)^{q},\quad p+q=1.
\end{equation}
This surface is the graph of the product of two functions on the variables $x$ and $y$: see \cite{lm}.    The surface defined by the   equation (\ref{k01})  appeared in \cite[Th. 1.3]{lm}.

\item Case $k\not=0$. Replacing $k$ by $1/(2k)$ in (\ref{eq44}), and integrating, we deduce that there are $a,b,c\in\r$ such that
\begin{equation}\label{hoo}
\frac{X'}{X}=\frac{2k}{u+a},\quad \frac{Y}{Y'}=\frac{2k}{v+b},\quad \frac{Z}{Z'}=\frac{2k}{w+c}.
\end{equation}
Substituting in (\ref{eq4}),  
$$u+v+w+a+b+c=a+b+c=0,$$
because of $u+v+w=0$. Thus $a+b+c=0$. Integrating \eqref{hoo}, we find
$$f'(x)=m_1(f(x)+a)^k$$
$$g'(y)=m_2(g(y)+b)^k$$
$$h'(z)=m_3(h(z)+c)^k,$$
for   some nonzero real numbers $m_i$, $1\leq i\leq 3$. The solutions of the above equations depend if $k=1$ or $k\not=1$. 

\begin{enumerate}

\item Case $k=1$. There are $n_i\in\r$ such that
\begin{eqnarray*}
f(x)&=&n_1 e^{m_1x}-a\\
 g(y)&=&n_2e^{m_2y}-b\\
 h(z)&=&n_3e^{m_3 z}-c,
 \end{eqnarray*}
and thus the equation of the surface $S$ is 
\begin{equation}\label{k03}
n_1 e^{m_1x}+n_2e^{m_2y}+n_3e^{m_3 z}=0.
\end{equation}
This surface is a generalized cylinder whose generators are parallel to  $\left(1/m_1,1/m_2,1/m_3\right)$ and the directrix curve is
$$\left\{
\begin{array}{ll}
& \frac{x}{m_1}+\frac{y}{m_2}+\frac{z}{m_3}=0\\
&n_1 e^{m_1x}+n_2e^{m_2y}+n_3e^{m_3 z}=0.
\end{array}
\right.$$

\item Case $k\not=1$. There are $n_i\in\r$,  such that
\begin{eqnarray*}
f(x)&=& \left((1-k)(m_1x+n_1)\right)^{\frac{1}{1-k}}-a\\
 g(y)&= &\left((1-k)(m_2y+n_2)\right)^{\frac{1}{1-k}}-b\\
 h(z)&= &\left((1-k)(m_3z+n_3)\right)^{\frac{1}{1-k}}-c.
\end{eqnarray*}
Then the implicit equation of the surface   is
\begin{equation}\label{k02}
(m_1x+n_1) ^{\frac{1}{1-k}} +(m_2y+n_2)^{\frac{1}{1-k}}+(m_3z+n_3)^{\frac{1}{1-k}}=0.
\end{equation}
This surface is conical with apex the point $\left(-n_1/m_1,-n_2/m_2,-n_3/m_3\right)$ and the directrix  curve is
$$\left\{
\begin{array}{l}
 z=d\not=-\frac{n_3}{m_3} \\
(m_1x+n_1) ^{\frac{1}{1-k}} +(m_2y+n_2)^{\frac{1}{1-k}}=-(m_3d+n_3)^{\frac{1}{1-k}}.
\end{array}
\right.$$
It deserves to point out that for some values of $k$, as $k=(2n-1)/(2n)$, $n\in\mathbb{N}$, equation (\ref{k02}) represents only a point.

\end{enumerate}
\end{enumerate}

 The previous cases $k=1$ and $k\not=1$ correspond with the item 4 of Theorem \ref{t1} and this completes the proof.

We finish this section showing some explicit parametrizations  of separable surfaces with zero constant Gaussian curvature for the case 4  of Theorem \ref{t1}.

\begin{example} Case $k=0$ in equation (\ref{k01}). We choose  $p=2$, $q=-1$,   $m_i=1$ and $n_i=0$. Then the surface is $x^2=yz$. See figure \ref{fig1}, left.

\end{example}

\begin{example} Case $k=1$ in  equation (\ref{k03}). We choose   $n_1=-1$, $n_2=n_3=1$ and $m_i=1$.  The implicit equation of the surface is 
$-e^x+e^y+e^z=0$. See figure   \ref{fig1}, middle.
 \end{example}
 
\begin{example} Case $k=2$ in equation (\ref{k02}). We choose   $m_i=1$ and $n_i=0$, obtaining
$1/x+1/y+1/z=0$, or equivalently, $z=xy/(x+y)$ with $x+y\not=0$. See figure \ref{fig1}, right.
\end{example}

\begin{figure}[hbtp]
\begin{center}\includegraphics[width=.3\textwidth]{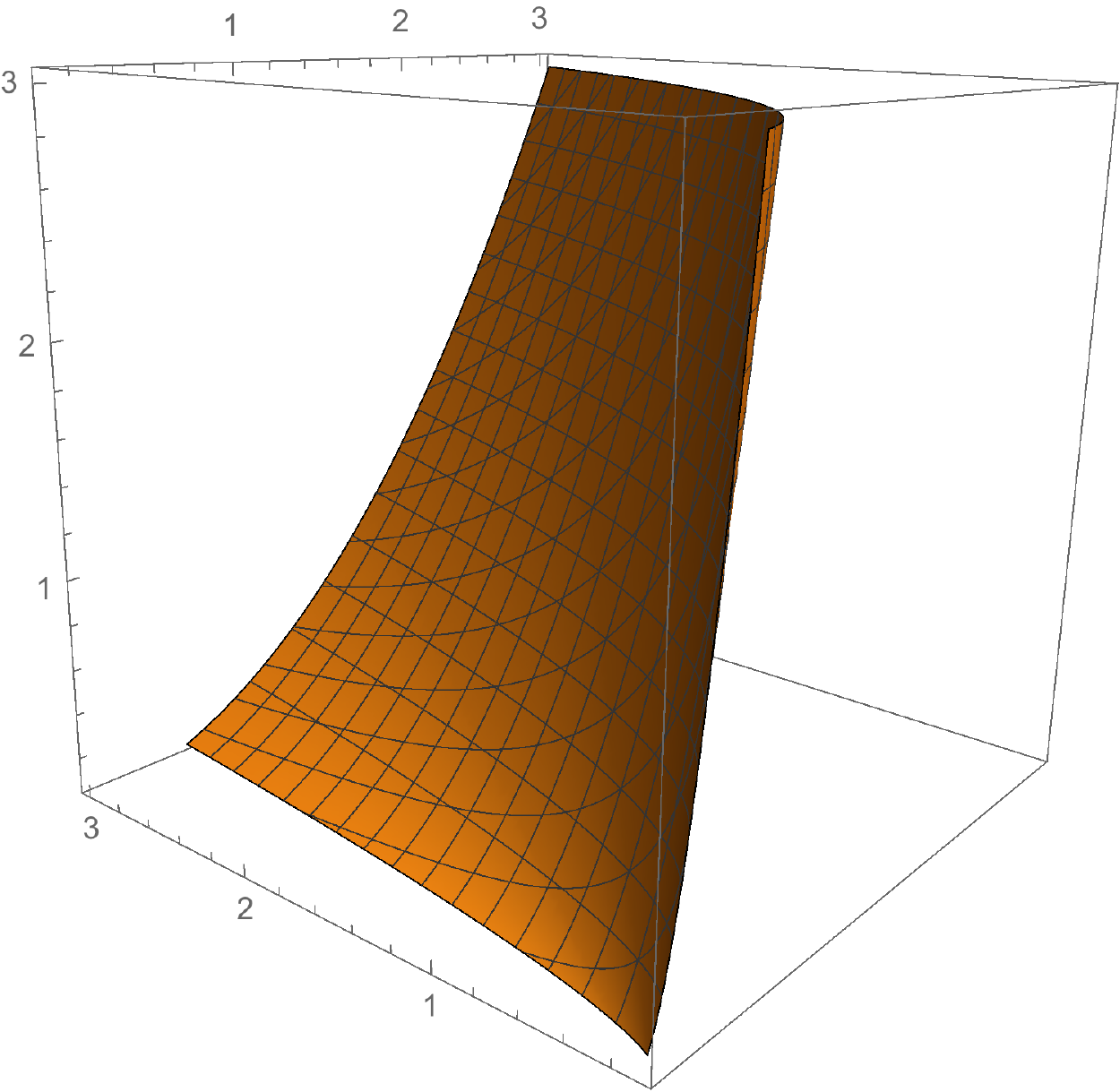}\quad \includegraphics[width=.3\textwidth]{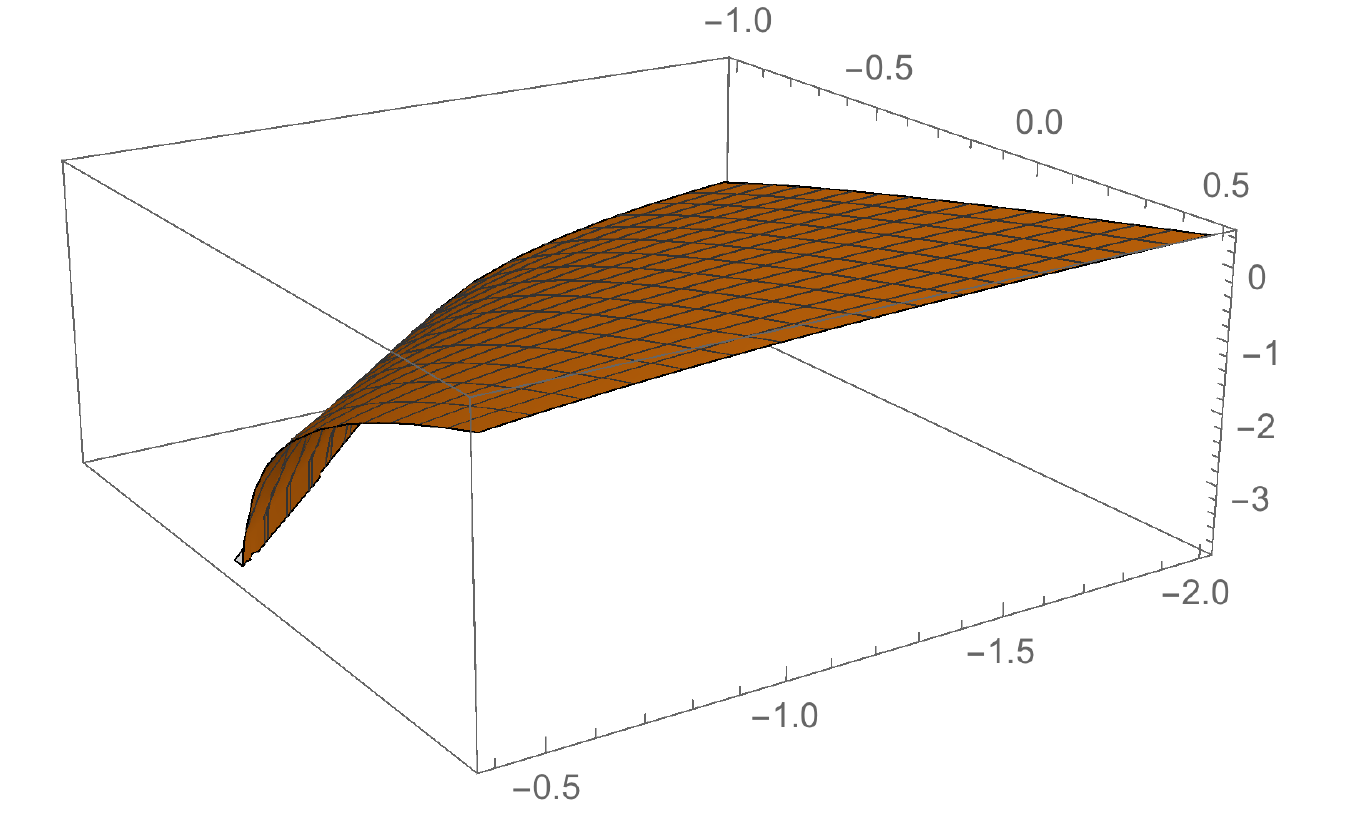}
\quad \includegraphics[width=.3\textwidth]{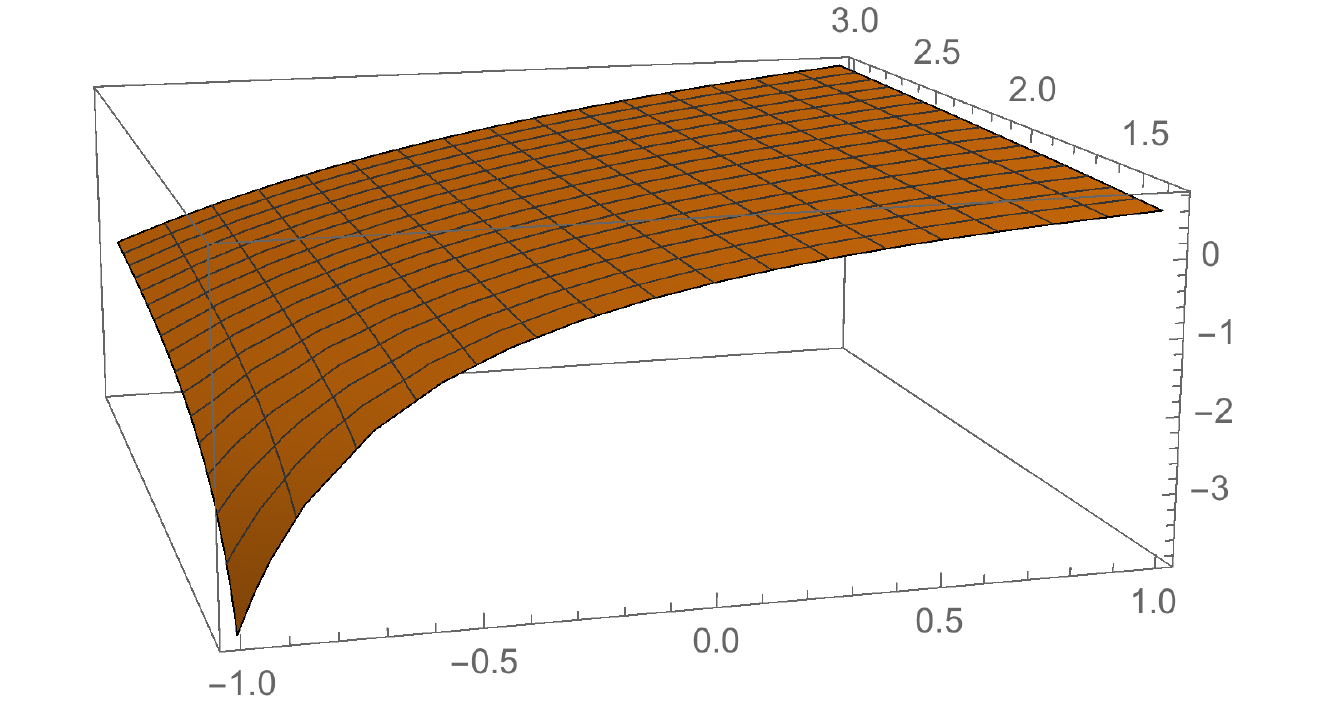} \end{center}
\caption{Left: the surface $x^2=yz$. Middle: the surface $-e^x+e^y+e^z=0$. 
Right:  the surface  $z=xy/(x+y)$.  }\label{fig1}
\end{figure}

%%%%%%%%%%%%%%%
\section{Case $K\not=0$: proof of Theorem \ref{t2}}  \label{s4}
%%%%%%%%%%%%%%%%%%%%%%%
In this section   we   prove Theorem \ref{t2}.  Consider a separable surface defined by (\ref{s1}) and suppose that the Gaussian curvature $K$ is a nonzero constant. In this setting, the particular cases that one of the functions $X$, $Y$ or $Z$ is constant, or that $X'$, $Y'$ or $Z'$ is zero and described in Proposition \ref{pr-tres}, can not appear because in such a case the Gaussian curvature should be zero. On the other hand, the case that one of the functions $X'-Y'$, $X'-Z'$ or $Y'-Z'$ vanishes proves that the surface is rotational about an axis parallel to one of the coordinate axes, proving just the statement of Theorem \ref{t2}. 

Therefore, and besides the rotational surfaces, it remains to prove that there are not  more surfaces of separable surfaces with nonzero constant Gaussian curvature. The proof of Theorem \ref{t2} is by contradiction.  Assume on the contrary  that the surface is not a surface of revolution about a straight-line parallel to one of the coordinates axes. In particular,  none of the functions $X'-Y'$, $X'-Z'$ or $Y'-Z'$ is identically $0$.  We write down again equation (\ref{k2}) 
\begin{equation}\label{eq50}XY'Z'+YX'Z'+ZX'Y'=4K(X+Y+Z)^2.
\end{equation}
Since $K\not=0$,   the surface is not a cylindrical surface neither a translation surface. This implies that $XYZ\not=0$ and $X'Y'Z'\not=0$. Then equation (\ref{eq50}) is 

\begin{equation}\label{eq5}
(X'Y+XY')Z'+(X'Y'-8K(X+Y))Z-4KZ^2-4K(X+Y)^2=0.
\end{equation} 

We simplify the notation of this equation by setting 
\begin{equation}\label{et21}
PZ'+QZ+R=4KZ^2,
\end{equation}
where
\begin{align*}
&P(u,v)=X'Y+XY'\\
&Q(u,v)=X'Y'-8K(X+Y)\\
&R(u,v)=-4K(X+Y)^2.
\end{align*}

Before to  indicate the arguments to prove Theorem \ref{t2}, we need a lemma that says us that the coefficient of $Z'$ in \eqref{eq5}, namely, the function $P$, is not zero.

\begin{lemma}\label{le12}
The function   $P=X'Y+XY'$ in (\ref{eq5}) can not vanish in any open set. 
\end{lemma}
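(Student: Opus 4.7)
The plan is to argue by contradiction, extracting from $P=0$ enough algebraic structure on $X$ and $Y$ to force a contradiction with the main curvature equation. Suppose that $P(u,v)=X'(u)Y(v)+X(u)Y'(v)$ vanishes identically on some open subset of $I_1\times I_2$. Since $XY\neq 0$ there, I would rewrite the relation as
$$\frac{X'(u)}{X(u)}=-\frac{Y'(v)}{Y(v)},$$
whose two sides depend on independent variables and must therefore equal a common constant $c$. The case $c=0$ would give $X'=Y'=0$, contradicting the standing assumption $X'Y'Z'\neq 0$. Hence $c\neq 0$, and integration yields nonzero constants $A,B$ with
$$X(u)=A\,e^{cu},\qquad Y(v)=B\,e^{-cv}.$$

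With this explicit form I would substitute into equation (\ref{eq5}); since $P=0$ the $Z'$ term drops out and what remains is
$$\bigl[X'Y'-8K(X+Y)\bigr]Z-4KZ^{2}-4K(X+Y)^{2}=0$$
for all $(u,v,w)$ satisfying $u+v+w=0$. The decisive step is to introduce the linear change of coordinates $s=u+v$, $r=u-v$, so that $w=-s$ depends only on $s$ while the other ingredients pick up an explicit dependence on $r$. A short computation gives
$$X+Y=e^{cr/2}\,\alpha(s),\qquad X'Y'=-c^{2}AB\,e^{cr},$$
where $\alpha(s)=A\,e^{cs/2}+B\,e^{-cs/2}$. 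Setting $\xi=e^{cr/2}$ and using that $Z=Z(-s)$ is independent of $r$, the relation becomes a polynomial identity in $\xi$ whose coefficients depend only on $s$:
$$4KZ^{2}+8K\alpha(s)Z\,\xi+\bigl(c^{2}AB\,Z+4K\alpha(s)^{2}\bigr)\xi^{2}=0.$$

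Because $r$ ranges over an open interval while $s$ is held fixed, $\xi$ sweeps out an open set of positive reals, so each coefficient must vanish separately. The coefficient of $\xi^{0}$ then reads $4KZ(s)^{2}=0$, and together with $K\neq 0$ this forces $Z\equiv 0$, contradicting $Z=h'^{2}\neq 0$. The main delicate point I expect is turning the single algebraic relation in three coupled variables $(u,v,w)$ with $u+v+w=0$ into enough independent scalar equations to produce a contradiction; the exponential form of $X$ and $Y$ forced by $P=0$ is exactly what makes the change of variables $(u,v)\mapsto(s,r)$ separate the $r$-dependence into a finite polynomial in $\xi$, and this separation is the heart of the argument.
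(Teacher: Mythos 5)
Your proof is correct, and it opens with the same observation as the paper --- that $P=0$ forces $X'/X=-Y'/Y$ to be a common nonzero constant --- but the mechanism by which you extract the contradiction is genuinely different. The paper never integrates: it substitutes $X'=aX$, $Y'=-aY$ into \eqref{eq50} to get $-a^{2}XYZ=4K(X+Y+Z)^{2}$, applies Lemma \ref{le1} once to produce a second identity $-2a^{3}XYZ=8aK(X+Y)(X+Y+Z)$, and combines the two to reach $8aKZ(X+Y+Z)=0$, which is impossible. You instead integrate the ODE to obtain the explicit exponentials $X=Ae^{cu}$, $Y=Be^{-cv}$, pass to coordinates $s=u+v$, $r=u-v$ in which $Z=Z(-s)$ is independent of $r$, and observe that the surviving relation is a quadratic identity in $\xi=e^{cr/2}$ with $s$-dependent coefficients; since $\xi$ sweeps an open set of positive reals for fixed $s$, the constant coefficient $4KZ^{2}$ must vanish, contradicting $Z=h'^{2}\neq 0$. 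Both arguments are sound, and your computations of $X+Y=e^{cr/2}\alpha(s)$ and $X'Y'=-c^{2}ABe^{cr}$ check out. What your route buys is transparency: it makes explicit that the constraint $u+v+w=0$ still leaves one free direction ($r$) along which the identity can be tested, replacing the paper's repeated use of the differentiation rule $Q_{u}=Q_{v}$ by a one-shot polynomial-independence argument; the cost is the extra step of solving the ODE. The only points worth making explicit in a final write-up are routine: the two sides of $X'/X=-Y'/Y$ are constant on an open rectangle contained in the set where $P$ vanishes, and the $r$-slice of the domain contains more than two points (guaranteed by openness), which is what licenses killing each coefficient of the quadratic in $\xi$ separately.
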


\begin{proof} The proof  is by contradiction. If $X'Y+XY'=0$ in an open set, then
\begin{equation}\label{c11}
\frac{X'}{X}=a=-\frac{Y'}{Y}
\end{equation}
where $a\in\r$ is  a constant. Now equation (\ref{eq50}) is
\begin{equation}\label{c1}
-a^2XYZ=4K(X+Y+Z)^2,
\end{equation}
in particular, $a\not=0$. By applying Lemma \ref{le1} to this equation,  and  differentiating with respect to $u$ and $v$, we obtain  
\begin{equation}\label{c2}
-2a^3 XYZ=8Ka(X+Y)(X+Y+Z),
\end{equation}
where we  have used \eqref{c11}. By combining \eqref{c1} and \eqref{c2}, we conclude $8KaZ(X+Y+Z)=0$, arriving to  a contradiction. 
\qed\end{proof}

Once proved Lemma \ref{le2}, we return to the equation \eqref{et21}. We apply Lemma \ref{le1} by differentiating  (\ref{et21}) with respect to $u$ and $v$. Then we find
$$(P_u-P_v)Z'+(Q_u-Q_v)Z+R_u-R_v=0,$$
or equivalently
\begin{equation}\label{eq55}
 (X''Y-XY'')Z'+(X''Y'-X'Y''-8K(X'-Y'))Z-8K(X+Y)(X'-Y')=0.
\end{equation}
We set
\begin{equation}\label{pqr}
\begin{split}
A(u,v)&=P_u-P_v=X''Y-XY''\\
 B(u,v)&=Q_u-Q_v=X''Y'-X'Y''-8K(X'-Y')\\
C(u,v)&=R_u-R_v=-8K(X+Y)(X'-Y').
\end{split}
\end{equation}
Then (\ref{eq55}) is
\begin{equation}\label{eq6}
AZ'+BZ+C=0,\quad \mbox{for all } u+v+w=0.
\end{equation}

We now present the steps of the proof of Theorem \ref{t2}. We will prove two  technical lemmas. First, we show that the functions $A$, $B$ and $C$ can not vanish (Lemma \ref{le2}) and then  we show that   $B/A$ and $C/A$ are functions on the variable $u+v$ (Lemma \ref{le3}). After that, the proof returns to the equation \eqref{et21} and, after successive applications of Lemma \ref{le1}, we arrive to the desired contradiction.

\begin{lemma}\label{le2}
The coefficients $A$ or $B$ or $C$ in (\ref{eq6}) can not be identically zero.
\end{lemma}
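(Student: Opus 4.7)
The plan is to proceed by contradiction, treating each of the three potential degeneracies $A\equiv 0$, $B\equiv 0$, $C\equiv 0$ in turn and reducing each to the forbidden relation $X'(u)\equiv Y'(v)$. The case $C\equiv 0$ is immediate: since $X=f'^2>0$ and $Y=g'^2>0$, the factor $X+Y$ is strictly positive, so $-8K(X+Y)(X'-Y')\equiv 0$ together with $K\neq 0$ forces $X'(u)\equiv Y'(v)$, a situation already excluded at the start of Section \ref{s4}.

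For $A\equiv 0$, the identity $X''Y\equiv XY''$, divided by the everywhere-positive $XY$, separates variables and gives $X''/X=Y''/Y=c_0$ for some $c_0\in\r$. Equation \eqref{eq6} collapses to $BZ+C=0$, which (since $C\not\equiv 0$) forces $B\not\equiv 0$ and expresses $Z$ as a rational function of $X,Y,X',Y'$ that must depend on the sole combination $u+v$. In the subcase $c_0=0$, $X$ and $Y$ are linear, and substituting into \eqref{eq50} leaves a right-hand side of the form $4K(au+cv+b+d+Z)^2$ which can only be a function of $w=-u-v$ when $a=c$, i.e.\ when $X'\equiv Y'$, a contradiction. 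In the subcase $c_0\neq 0$, I would apply Lemma \ref{le1} a second time to $BZ+C=0$ (using the consequences $X'''=c_0X'$ and $Y'''=c_0Y'$), producing a second $Z$-linear relation; eliminating $Z$ yields a polynomial identity in $X,Y,X',Y'$ which, combined with the first integrals $X'^2-c_0X^2=k_1$ and $Y'^2-c_0Y^2=k_2$ and further separation of variables, collapses again to $X'\equiv Y'$.

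For $B\equiv 0$ the same template applies: dividing $X''Y'-X'Y''=8K(X'-Y')$ by $X'Y'$ separates variables as $(X''+8K)/X'=(Y''+8K)/Y'=\gamma$, a constant, so $X''=\gamma X'-8K$ and $Y''=\gamma Y'-8K$. Equation \eqref{eq6} reads $AZ'+C=0$ with $A=8K(X-Y)+\gamma(X'Y-XY')\not\equiv 0$ (else $X,Y$ would be constants), whence $Z'(-u-v)=-C/A$. In the subcase $\gamma=0$, $X$ and $Y$ are quadratic in $u,v$; restricting to the line $u+v=s_0$ and parametrizing by $t=u-v$, the right-hand side becomes a rational function of $t$ whose numerator is a cubic with leading coefficient $16K^2\neq 0$ while the denominator has degree at most $1$, contradicting constancy in $t$. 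The subcase $\gamma\neq 0$ is treated by the same line-parametrization strategy applied to the corresponding exponential (or trigonometric) forms of $X$ and $Y$.

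The main obstacle will be the subcases with a nonzero separation constant ($c_0\neq 0$ in the $A$-case, $\gamma\neq 0$ in the $B$-case), where $X$ and $Y$ become transcendental in $u$ and $v$ and the algebraic bookkeeping is heavier. The guiding principle throughout is that any identity holding on $\{u+v+w=0\}$ among functions separable in $u,v,w$ can be reduced by repeatedly applying the operator $\partial_u-\partial_v$ supplied by Lemma \ref{le1}, eventually forcing the prohibited equality $X'\equiv Y'$.
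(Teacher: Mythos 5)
Your treatment of $C\equiv 0$ is correct and identical to the paper's, and your separations of variables in the other two cases ($X''/X=Y''/Y=c_0$ when $A\equiv 0$; $X''=\gamma X'-8K$, $Y''=\gamma Y'-8K$ when $B\equiv 0$) are exactly the paper's starting points. Your two \emph{executed} subcases are also sound, and they use a genuinely different device than the paper: you restrict to a line $u+v=s_0$, parametrize by $t=u-v$, and count degrees in $t$ (e.g.\ the cubic numerator with leading coefficient $16K^2$ against a denominator of degree at most $1$ in the $\gamma=0$ subcase). That is more elementary than the paper's method and works where you apply it.

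The genuine gap is that the two hard subcases --- $c_0\neq 0$ in the $A$-case and $\gamma\neq 0$ in the $B$-case --- are only announced as plans, and these are precisely where the real work of the lemma lies. ``Eliminating $Z$ yields a polynomial identity which \dots collapses again to $X'\equiv Y'$'' and ``treated by the same line-parametrization strategy'' are not arguments; with $X,Y$ now exponential in $u,v$ the degree-counting trick does not transfer verbatim (you would have to compare coefficients of $e^{\pm\gamma t/2}$, $te^{\pm\gamma t/2}$, etc., and verify that no cancellation rescues the identity), and you have not done so. The paper closes these subcases by a different and quite short route that you should be aware of: it applies the operator $\partial_u-\partial_v$ of Lemma \ref{le1} \emph{twice} to the $Z$-linear relation (resp.\ the $Z'$-linear relation), each time using the ODEs $X''=aX$, $Y''=aY$ (resp.\ $X''=aX'-8K$, $Y''=aY'-8K$) to rewrite second derivatives, and then takes a linear combination with the original relation in which all the transcendental coefficients cancel, leaving $24aK(X'-Y')Z=0$ in the $A$-case and, after an intermediate identity $2aZ'+3a(X'-Y')-48K=0$, the relation $3a^2(X'-Y')=0$ in the $B$-case --- both immediate contradictions. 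Until you either carry out your coefficient comparison or adopt such an elimination, the lemma is not proved. (Two small additional remarks: your justification ``$A\not\equiv 0$ else $X,Y$ would be constants'' in the $B$-case is shaky; the clean reason is that $A\equiv 0$ and $B\equiv 0$ together with \eqref{eq6} would force $C\equiv 0$, already excluded. Also, the roots of $\lambda^2-\gamma\lambda=0$ are real, so no trigonometric forms of $X,Y$ arise.)
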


\begin{proof}
It is clear that $C=0$ implies $X'-Y'=0$, which is not possible. By contradiction, we suppose $A=0$ or $B=0$ identically. 
\begin{enumerate}
\item Case $A=0$. Then $X''Y=XY''$, so $X''/X=Y''/Y=a$, where $a\in\r$ is a constant. Then (\ref{eq55}) is 
\begin{equation}\label{eq552}
F:=\left(a(XY'-X'Y)-8K(X'-Y')\right)Z-8K(X+Y)(X'-Y')=0.
\end{equation}
By Lemma \ref{le1}, the expression $F_u-F_v=0$ is  
$$G:=\left(2aX'Y'-2a^2XY-8aK(X+Y)\right)Z-8K((X'-Y')^2+a(X+Y)^2)=0.$$
Using Lemma \ref{le1} again for the function $G$, the equation  $G_u-G_v=0$ becomes
\begin{equation}\label{eq553}
 \left(4a^2(XY'-X'Y)-8Ka(X'-Y')\right)Z-32 aK(X+Y)(X'-Y')=0.
\end{equation}
Subtracting (\ref{eq553}) from (\ref{eq552}), we deduce
$$24aK(X'-Y')Z=0,$$
which it is a contradiction.

\item Case $B=0$. Then
$$\frac{X''}{X'}+8K\frac{1}{X'}=a=\frac{Y''}{Y'}+8K\frac{1}{Y'}$$
for some constant $a\in\r$. Then 
\begin{equation}\label{eq5-1}
X''=aX'-8K,\quad Y''=aY'-8K.
\end{equation}
Taking into account both expressions, equation (\ref{eq55}) is 
\begin{equation}\label{fff}
F:=\left(a(X'Y-XY')+8K(X-Y)\right)Z'-8K(X+Y)(X'-Y')=0.
\end{equation}
 We utilize Lemma \ref{le1} by differentiating with respect to $u$ and $v$. Then    $F_u-F_v=0$ and using (\ref{eq5-1}), we deduce
\begin{align*}G:&=\left(a^2(X'Y+XY')-8aK(X+Y)-2aX'Y'+8K(X'+Y')\right)Z' \\
& -8K\left((X'-Y')^2-8aK(X+Y)(X'+Y')+128K^2(X+Y)\right)=0.
\end{align*}
Again, we compute $G_u-G_v=0$ and using \eqref{eq5-1}, we have
\begin{eqnarray*}
&&\left(a^3(X'Y-XY')-16aK(X'-Y')+8a^2K(X-Y)\right)Z'\nonumber\\
&& -24aK(X'-Y')(X'+Y')-8a^2K(X+Y)(X'-Y')+384K^2(X'-Y')=0. 
\end{eqnarray*}
If $a=0$, then $X'-Y'=0$, which is not possible. So, $a\not=0$. By combining  this equation and (\ref{fff}), we deduce
$$
H:= 2aZ'+3a(X'-Y')-48K=0.
$$
Finally, using Lemma \ref{le1} for the function $H$ and \eqref{eq5-1}, the equation  $H_u-H_v=0$ yields 
$$3a(X''-Y'')=3a^2(X'-Y')=0,$$
 which is a contradiction.

\end{enumerate}
\qed\end{proof}

After Lemma \ref{le2}, we differentiate (\ref{eq6}) with respect to $u$ and $v$, obtaining
$$(A_u-A_v)Z'+(B_u-B_v)Z+(C_u-C_v)=0.$$

\begin{lemma}\label{le3}
With the above notation, we have  
$$\frac{A_u-A_v}{A}=\frac{B_u-B_v}{B}=\frac{C_u-C_v}{C},$$
or equivalently,
$$
\left(\frac{B}{A}\right)_u=\left(\frac{B}{A}\right)_v,\quad \left(\frac{C}{A}\right)_u=\left(\frac{C}{A}\right)_v,\quad \left(\frac{C}{B}\right)_u=\left(\frac{C}{B}\right)_v.
$$
\end{lemma}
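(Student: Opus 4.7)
The two equivalent forms of the conclusion amount to saying that $(A,B,C)$ and $(A_u-A_v,B_u-B_v,C_u-C_v)$ are parallel at each point of the section, or equivalently that $B/A$ and $C/A$ depend only on $u+v$. The plan is to apply Lemma \ref{le1} a second time to \eqref{eq6}: setting $V(u,v,w):=AZ'(w)+BZ(w)+C$, which vanishes on the section by \eqref{eq6}, Lemma \ref{le1} gives $V_u=V_v=V_w$ there. The equality $V_u=V_v$ has already produced the displayed equation immediately preceding the statement of the lemma; the plan now is to exploit the further equalities $V_u=V_w$ and $V_v=V_w$ and to combine them with the quadratic \eqref{et21}.

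Concretely, $V_u=V_w$ on the section reads $AZ''=(A_u-B)Z'+B_uZ+C_u$, and $V_v=V_w$ gives the analogous expression with $v$ in place of $u$. Eliminating $Z'=-(BZ+C)/A$ via \eqref{eq6} in each yields, after a routine calculation, the two formulas
\[
Z''=\bigl((B/A)^2+(B/A)_u\bigr)Z+\bigl((B/A)(C/A)+(C/A)_u\bigr),
\]
and likewise with subscript $v$. Subtracting produces the key auxiliary identity on the section,
\[
\phi\,Z+\psi=0,\qquad \phi:=(B/A)_u-(B/A)_v,\quad \psi:=(C/A)_u-(C/A)_v.
\]

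To upgrade this to $\phi\equiv 0$ and $\psi\equiv 0$, I would combine $\phi Z+\psi=0$ with the quadratic obtained by substituting $Z'=-(BZ+C)/A$ into \eqref{et21}, namely
\[
4KA\,Z^2+(PB-AQ)Z+(PC-AR)=0\qquad\text{on the section},
\]
and with the relations obtained by a third application of Lemma \ref{le1}, now to $\phi Z+\psi=0$ itself (viewed as a function on $\r^3$ vanishing on the section), which yield $\phi Z'=\phi_uZ+\psi_u=\phi_vZ+\psi_v$. Using $Z=-\psi/\phi$ and $Z'=(B\psi-C\phi)/(A\phi)$ (from $\phi Z+\psi=0$ and \eqref{eq6}), together with $K\neq 0$ and $P\neq 0$ (Lemma \ref{le12}), the resulting algebraic system should be incompatible with $\phi\not\equiv 0$, forcing $\phi=\psi=0$. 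The remaining identity $(C/B)_u=(C/B)_v$ is then an algebraic consequence of the first two.

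The main obstacle is precisely this final step. The three linear relations \eqref{eq6}, the displayed equation preceding the lemma, and $\phi Z+\psi=0$ alone only constrain the vectors $(A,B,C)$ and $(A_u-A_v,B_u-B_v,C_u-C_v)$ to lie in the common two-dimensional hyperplane orthogonal to $(Z',Z,1)$, which is not sufficient to force proportionality. The essential extra rigidity must come from the nonlinear $4KZ^2$ term in \eqref{et21}, so that the hypothesis $K\neq 0$ enters decisively at this point; without it, an analogous statement would fail, as witnessed by the very different structure of the $K=0$ case treated in Section \ref{s3}.
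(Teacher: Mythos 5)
There is a genuine gap, and you have located it yourself: your argument stops exactly where the content of the lemma lies. The computations up to the identity $\phi Z+\psi=0$ are correct (and the quadratic $4KAZ^2+(PB-AQ)Z+(PC-AR)=0$ you obtain by eliminating $Z'$ is precisely the paper's equation \eqref{eq10}), but $\phi=(B/A)_u-(B/A)_v$ and $\psi=(C/A)_u-(C/A)_v$ are exactly the quantities the lemma asserts to vanish, and the single relation $\phi Z+\psi=0$ cannot force $\phi=\psi=0$: for fixed $u+v$ the value $Z(w)=Z(-u-v)$ is one number, so on each line $u+v=\mathrm{const}$ you get only one linear constraint between $\phi$ and $\psi$. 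The further differentiations you propose (applying Lemma \ref{le1} to $\phi Z+\psi$) produce more relations of the same linear type and, as you observe, only confine the relevant vectors to a common hyperplane. The assertion that ``the resulting algebraic system should be incompatible with $\phi\not\equiv 0$'' is a conjecture rather than a proof; no computation is exhibited in which the $4KZ^2$ term actually yields the contradiction. As written, the decisive step is missing.

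For comparison, the paper closes this gap without any second-order quantities. It differentiates the quadratic \eqref{eq10} in the $u-v$ direction via Lemma \ref{le1}, obtaining a second quadratic \eqref{eq11}, and exploits a structural fact recorded in \eqref{lu}: because $A=P_u-P_v$, $B=Q_u-Q_v$, $C=R_u-R_v$, the cross terms cancel and $L_u-L_v$, $M_u-M_v$, $N_u-N_v$ are the \emph{same} linear combinations of $A_u-A_v$, $B_u-B_v$, $C_u-C_v$ as $L$, $M$, $N$ are of $A$, $B$, $C$. Hence, in the main case $L_u-L_v\not=0$, the proportionality $\frac{L_u-L_v}{L}=\frac{M_u-M_v}{M}=\frac{N_u-N_v}{N}$ collapses, after cancelling the common $A$-terms and dividing by $P=X'Y+XY'\not=0$ (Lemma \ref{le12}), to exactly the claimed proportionality for $(A,B,C)$; the degenerate case $L_u-L_v=0$ is then disposed of separately. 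If you wish to salvage your route you must carry out explicitly the elimination you only sketch; otherwise the argument should be redirected along the lines above.
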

 
\begin{proof}
 
 By eliminating $Z'$ from (\ref{eq5}) and (\ref{eq6}), we find
 \begin{eqnarray*}
 &&4KAZ^2+\Big\{ (8K(X+Y)-X'Y')A+(X'Y+XY')B\Big\}Z\\
&&+	4K(X+Y)^2A+(X'Y+XY')C=0.
\end{eqnarray*}
 We write this relation as
 \begin{equation}\label{eq10}
 LZ^2+MZ+N=0,
 \end{equation}
 where
 \begin{eqnarray*}
 L&=& 4KA\\
 M &=& \left(8K(X+Y)-X'Y'\right)A+(X'Y+XY')B\\
 N&=& 4K(X+Y)^2A+(X'Y+XY')C.
 \end{eqnarray*}
 
 Applying Lemma \ref{le1} to (\ref{eq10}) differentiating with respect to $u$ and $v$, we obtain
 \begin{equation}\label{eq11}
 (L_u-L_v)Z^2+(M_u-M_v)Z+N_u-N_v=0.
 \end{equation}
 Simple calculations give
 \begin{equation}\label{lu}
 \begin{split}
 &L_u-L_v=4K(A_u-A_v)\\
 &M_u-M_v=(8K(X+Y)-X'Y')(A_u-A_v)+(X'Y+XY')(B_u-B_v)\\
&N_u-N_v=4K(X+Y)^2(A_u-A_v)+(X'Y+XY')(C_u-C_v).
 \end{split}
 \end{equation}
 
We distinguish two cases in the proof of Lemma \ref{le3}:
\begin{enumerate}
\item Case $L_u-L_v\not=0$. Since equations (\ref{eq10}) and (\ref{eq11}) have the same solutions, 
$$
\frac{L_u-L_v}{L}=\frac{M_u-M_v}{M}=\frac{N_u-N_v}{N},
$$
or equivalently, 
\begin{equation}\label{eq13}
\begin{split}
&M(L_u-L_v)=L(M_u-M_v)\\
& N(L_u-L_v)=L(N_u-N_v).
\end{split}
\end{equation}
Using the expressions of $L_u-L_v$, $M_u-M_v$ and $N_u-N_v$ in (\ref{lu}), the two equations of (\ref{eq13}) are, respectively,
$$B(X'Y+XY')(A_u-A_v)=A(X'Y+XY')(B_u-B_v)$$
$$C(X'Y+XY')(A_u-A_v)=A(X'Y+XY')(C_u-C_v).$$
Since $X'Y+XY'\not=0$ by Lemma \ref{le12}, if follows the result of   Lemma \ref{le3}.

\item Case  $L_u-L_v=0$. 
\begin{enumerate}
\item Subcase $M_u-M_v=0$. Then   \eqref{eq11} implies  $N_u-N_v=0$. From (\ref{lu}) we deduce  $A_u-A_v=B_u-B_v=C_u-C_v=0$, proving the result.
\item Subcase $M_u-M_v\not=0$. From (\ref{eq11}), we deduce that (\ref{eq10}) has a unique solution, namely, 
$$Z=-\frac{M}{2L}.$$
Then
$$\left(\frac{M}{L}\right)_u-\left(\frac{M}{L}\right)_v=0.$$
This implies $(M_u-M_v)L=0$, and we conclude $M_u-M_v=0$, a contradiction.

\end{enumerate}
\end{enumerate}
\qed\end{proof}

Once proved Lemmas \ref{le2} and \ref{le3}, we are in position to complete the proof of Theorem \ref{t2}.

\begin{proof}[of Theorem \ref{t2}]

From Lemma \ref{le3} we deduce that there exist functions $\Phi$ and $\Psi$ of one variable such that
$$	\Psi(u+v)=\frac{B(u,v)}{A(u,v)},\quad      \Phi(u+v)=\frac{C(u,v)}{A(u,v)}.$$
By Lemma \ref{le2}, $A\not=0$ and  we write (\ref{eq10}) as
\begin{equation}\label{et22}
Z^2+\frac{M}{L}Z+\frac{N}{L}=0, 
\end{equation}
This  a polynomial equation on $Z$. Then the two roots $Z_1(w)$ and $Z_2(w)$ of this equation satisfy
$$Z_1(w)+Z_2(w)=-\frac{M}{L},\quad Z_1(w) Z_2(w)=-\frac{N}{L}.$$
Since $u+v+w=0$, the functions $M/L$ and $N/L$ depend only on the variable $u+v$. Denote 
$$\Psi_1(u+v)=\frac{M(u,v)}{L(u,v)},\quad \Phi_1(u+v)=\frac{N(u,v)}{L(u+v)}.$$
Thus \eqref{et22} is now
$$
Z^2+\Psi_1 Z+\Phi_1 =0.
$$
Applying Lemma \ref{le1} to this equation with   respect to $w$ and $u$, we find
$$(2Z+\Psi_1)Z'-\Psi_1' Z-\Phi_1'=0.$$
From this equation and  \eqref{et21}, we   eliminate  $Z'$, obtaining
$$\left((2Z+\Psi_1)Q+P\Psi_1'\right)Z+(2Z+\Psi_1)R+P\Phi_1'=4KZ^2(2Z+\Psi_1),$$
or equivalently, 
\begin{equation}\label{ppp}
Z^3+\left(\frac{\Psi_1}{2}-\frac{Q}{4K}\right)Z^2-\left(\frac{P\Psi_1'+2R+Q\Psi_1}{8K}\right)Z-\frac{P\Phi_1'+R\Psi_1}{8K}=0.
\end{equation}
This is a polynomial equation on $Z$. Let $Z_1(w)$, $Z_2(w)$ and $Z_3(w)$ denote the three roots of this equation. Then the function
$$T(u+v)=Z_1(w)+Z_2(w)+Z_3(w)$$
is the opposite of the coefficient of  $Z^2$ in \eqref{ppp}, so 
$$T(u+v)=\frac{Q(u,v)}{4K}-\frac{\Psi_1(u+v)}{2}.$$
We apply  to this equation Lemma \ref{le1} differentiating with respect to the variables $u$ and $v$, obtaining
$$0=\frac{1}{4K}(Q_u-Q_v)=\frac{1}{4K} B$$
by \eqref{pqr}. This is   a contradiction by Lemma \ref{le2}, and this concludes the proof of Theorem \ref{t2}.
\qed\end{proof}

\section*{Acknowledgements} Rafael L\'opez has been partially supported by the grant no. MTM2017-89677-P, MINECO/AEI/FEDER, UE.

 %%%%%%%%%%%%%%%%%%%%%%

\end{document}